\newtheorem{theorem}[subsection]{Theorem}
\newtheorem{lemma}[subsection]{Lemma}
\newtheorem{remark}[subsection]{Remark}
\newcommand\RRR{\mathbb{R}}
\newcommand\ZZZ{\mathbb{Z}}
\newcommand{\NNN}{\mathbb{N}}
\newcommand{\KR}{\Gamma}
\newcommand{\Stab}{\mathcal{S}}
\newcommand{\Aut}{\mathrm{Aut}}
\newcommand{\Gstab}{G}
\newcommand{\st}{\mathrm{st}}
\newcommand{\clP}{\mathcal{P}}
\newcommand{\Diff}{\mathcal{D}}
\newcommand{\Orb}{\mathcal{O}}
\newcommand{\id}{\mathrm{id}}
\newcommand{\Gclass}{\mathscr{G}}
\newcommand{\E}{\mathcal{E}}
\newcommand{\todo}[1]{}
\renewcommand{\todo}[1]{{{\bf\color{Blue} TODO: {#1}}}}
\begin{document}
\title[Automorphisms of  graphs of Morse functions on $2$-torus]
{Automorphisms of Kronrod-Reeb graphs of  Morse functions on $2$-torus}

\author{Anna Kravchenko}
\address{Taras Shevchenko National University of Kyiv, Volodymyrska St, 60, Kyiv, 01033, Ukraine}
\email{annakravchenko1606@gmail.com}

\author{Bohdan Feshchenko}
\address{Topology laboratory, Department of algebra and topology, Institute of Mathematics of National Academy of Science of Ukraine,
Tereshchenkivska, 3, Kyiv, 01601, Ukraine}
\curraddr{}
\email{fb@imath.kiev.ua}

\subjclass[2010]{}
\keywords{Automorphisms, graphs, Morse functions}

\begin{abstract}
	This paper is devoted to the study of special subgroups of the automorphism groups of  Kronrod-Reeb graphs of a Morse functions on $2$-torus $T^2$ which arise from the action of diffeomorphisms preserving a given Morse function on $T^2$. In this paper we give a full description of such classes of groups.
\end{abstract}

\maketitle
\section{Introduction}\label{sec:Introduction}
Topological graphs naturally arise from the study of smooth functions on smooth manifolds as the powerful tools which contain  ``combinatorial'' information about a given smooth function and hence the information about a topology of a smooth manifolds. Kronrod-Reeb graphs of  Morse functions on compact manifolds, named after G.~Reeb by R.~Thom and A.~Kronrod by V.~Sharko are the famous example of such graphs.  They were studied by many authors. E.g.
%
the  problem of realization of a graph as a Kronrod-Reeb graph of a Morse (in general smooth) function on a given smooth compact manifold was proposed by V.~Sharko and were studied in papers of V.~ Sharko \cite{Sharko:MFAT:2006}, Y.~Masumoto and O.~Saeki \cite{MasumotoSaeki:2011:Kyushu}, \L{}.~Michalak \cite{Michalak:2018:TopolMethodsNonlinearAnal}, K. Cole-McLaughlin et al \cite{Cole-McLaughlin:2004:DiscreteComputGeom}, M.~Kaluba, W.~Marzantowicz, N.~Silva \cite{KalubaMarzantowicz:2015:TopolMethodsNonlinearAnal} and others. This problem is 
closely related to other important problem of topologically conjugacy for Morse functions on smooth manifolds, see {e.g.} E.~Kulinich \cite{Kulinich:1998:MFAT}, V.~Sharko \cite{Sharko:UMZ:2003}, D.~Lychak and O.~Prishlyak \cite{LychakPrishlyak:2009:MFAT}. E.~Polulyakh \cite{Polulyakh:2015:UMZH, Polulyakh:2016:UMZH}  generalized the notion of Kronrod-Reeb ``graph'' for functions on non-compact surfaces.


Homotopy properties of Morse functions on smooth surfaces were studied by V.~Sharko \cite{Sharko:PrIntMat:1998}, H.~Zieschang, S.~Matveev, E.~Kudryavtseva \cite{Kudryavtseva:MatSb:1999}, K.~Ikegami and O.~Saeki \cite{IkegamiSaeki:JMSJap:2003}, B.~Kalmar \cite{Kalmar:KJM:2005}, S.~Maksymenko.
We give a short overview of the results of S.~Maksymenko  \cite{Maksymenko:AGAG:2006, Maksymenko:MFAT:2010, Maksymenko:ProcIM:ENG:2010, Maksymenko:UMZ:ENG:2012, Maksymenko:DefFuncI:2014} and E.~Kudryavtseva devoted to the study of homotopy properties of orbits and stabilizers of smooth functions on compact surfaces under the action of their diffeomorphism groups. And we will see that  subgroups of the automorphism groups of Kronrod-Reeb graphs plays the essential role in the description of homotopy types of these spaces.

Let $M$ be a smooth compact surface and $X$ be a closed (possible empty) subset of $M$. The diffeomorphisms group $\Diff(M,X)$ acts on the space of smooth functions $C^{\infty}(M)$ by the rule: $C^{\infty}(M)\times\Diff(M,X)\to C^{\infty}(M)$, $(f,h) \mapsto f\circ h$. Under this action  by
$$
\Stab(f,X) = \{h\in \Diff(M,X)\,|\, f\circ h = f\},\qquad\Orb(f,X) = \{f\circ h\,|\, h\in\Diff(M,X)\}
$$
we denote the stabilizer and the orbit of $f\in C^{\infty}(M).$ Endow on $C^{\infty}(M)$ and $\Diff(M,X)$ strong Whitney topologies; for the function $f\in C^{\infty}(M)$ these topologies induce some topologies on $\Stab(f,X)$ and $\Orb(f,X)$.
 By $\Diff_{\id}(M,X)$, $\Stab_{\id}(f,X)$, and $\Orb_f(f,X)$ we denote  connected components of the identity map $\id_M$ of $\Diff(M,X)$, $\Stab(f,X)$, and the component of $\Orb(f,X)$ contains $f$ respectively. If $X=\varnothing$ we put $\Diff(M):=\Diff(M,\varnothing)$, $\Stab(f) := \Stab(f,\varnothing)$, $\Orb(f):= \Orb(f,\varnothing)$, and so on.
 
In this paper we will consider Kronrod-Reeb graphs of  Morse functions on smooth compact surfaces. Since these surfaces may have the boundary we need to specify what  will be meant by a Morse function.
 By a Morse function $f$ on  a surface $M$ we will mean a smooth function $f\in C^{\infty}(M)$ on a surface $M$ such that  $f$ takes constant values on the connected components of the boundary $\partial M$ and each critical point of $f$ is non-degenerate and is contained in $\mathrm{Int}(M).$ A Morse function $f$ on $M$ is called {\it simple} if every critical connected component of  every critical level-set  contains a unique critical point, and  $f$  is called {\it generic} if each level-set of $f$ contains no more than one critical point.
  
S.~Maksymenko,  \cite{Maksymenko:AGAG:2006, Maksymenko:MFAT:2010, Maksymenko:ProcIM:ENG:2010, Maksymenko:UMZ:ENG:2012, Maksymenko:DefFuncI:2014}, showed that if $f$ has at least one saddle point, then $\pi_n\Orb_f(f) = \pi_n M$ for $n\geq 3,$ $\pi_2\Orb_f(f) = 0$, and for $\pi_1\Orb_f(f)$ there is a short exact sequence
 \begin{equation}\label{eq:main}
 \xymatrix{
1\ar[r] & \pi_1\Diff_{\id}(M)\oplus \ZZZ^k\ar[r] & \pi_1\Orb_f(f) \ar[r] & \Gstab(f) \ar[r] & 1 
}
\end{equation}
for some $k\geq 0$ and a finite group $\Gstab(f)$ which is a group of automorphisms of a Kronrod-Reeb graph of $f$ induced by isotopic to $\id_M$ and $f$-preserving diffeomorphisms, i.e., diffeomorphisms from $\Stab'(f) = \Stab(f)\cap \Diff_{\id}(M)$. Moreover if $f$ is generic, then the group $G(f)$ is trivial, and $\mathcal{O}_f(f)$ is homotopy equivalent to $(S^1)^m$ if $M\neq S^2$ and $M\neq\RRR P^2$, to $S^2$ if $M = S^2$ and $f$ have only two critical points, and to $\mathrm{SO}(3)\times (S^1)^m$ otherwise, for some $m\geq 0$ depending on $f$. E.~Kudryavtseva \cite{Kudryavtseva:MathNotes:2012, Kudryavtseva:MatSb:2013} calculated the homotopy types of connected components of the space of Morse functions on compact surfaces, and extend the result about homotopy type of $\mathcal{O}_f(f)$  when $G(f)$ is non-trivial. She showed that  the space $\mathcal{O}_f(f)$ is homotopy equivalent to the following quotient-spaces $(S^1)^m/G(f)$ if $M\neq S^2$, and to $\mathrm{SO}(3) \times (S^1)^m/G(f)$ otherwise, where $G(f)$ freely acts on $(S^1)^m$, and the number $m$ is the rank of the abelian group  $\pi_1\mathcal{D}_{\id}(M)\oplus \ZZZ^k$, see \eqref{eq:main}. Note that $G(f)$ is the holonomy group of the compact manifold $(S^1)^m/G(f)$.

An algebraic structure of $\pi_1\mathcal{O}_f(f)$ for the Morse functions on $2$-torus was described in the series of papers by S. Maksymenko and the second author \cite{MaksymenkoFeshchenko:2014:HomPropCycle, MaksymenkoFeshchenko:2014:HomPropTree, MaksymenkoFeshchenko:2015:HomPropCycleNonTri, Feshchenko:2014:HomPropTreeNonTri}. This is one of non-trivial cases since $\mathcal{D}_{\id}(T^2)$ is not contractible, so the image $\pi_1\mathcal{D}_{\id}(T^2)$ in $\pi_1\mathcal{O}_f(f)$ has non-trivial impact on the algebraic structure of $\pi_1\mathcal{O}_f(f)$, see \eqref{eq:main}.

Since groups $G(f)$ play the essential role in the description of homotopy types of $\mathcal{O}_f(f)$ S.~Maksymenko and A.~Kravchenko \cite{MaksymenkoKravchenko:Graphs:2019} studied classes of such groups and their subsets.
Let $\Gclass(M)$ be the isomorphisms class of groups $\Gstab(f)$ for all Morse functions $f$ on $M$. By $\Gclass^{\mathrm{smp}}(M)$ and $\Gclass^{\mathrm{gen}}(M)$ will be denoted subclasses of $\Gclass(M)$ which corresponds to  isomorphisms classes of groups $\Gstab(f)$ for all simple and generic Morse functions on $M$ respectively. The first author and S.~Maksymenko gave a full algebraic  description of classes $\Gclass(M)$ and $\Gclass^{\mathrm{smp}}(M)$   for all compact oriented surface $M$ which are distinct from $2$-sphere $S^2$ and  $2$-torus $T^2$, and  proved that the class  $\Gclass^{\mathrm{gen}}(M)$ is trivial, i.e., contains only trivial group $\{1\}$ for  all compact oriented surface $M$, see Theorem \ref{thm:classPP2}, or \cite{MaksymenkoKravchenko:Graphs:2019}. We also mention that an algebraic structure of $G(f)$ for Morse function on $S^2$ is partially understood   \cite{MaksymenkoKravchenko:GraphsSp:2018}, and in general this case is more complicated than the case of Morse functions on compact surfaces of genus $\geq 1$. Recently   S.~Maksymenko and A.~Kravchenko \cite{ MaksymenkoKravchenko:GraphsSp:2019} described special subgroups of $G(f)$ for Morse functions on $S^2$.

The aim of this paper is to extend the result of \cite{MaksymenkoKravchenko:Graphs:2019} by  given a {\it full description of the classes $\Gclass(T^2)$ and} $\Gclass^{smp}(T^2)$ for a Morse functions on $2$-torus. 
 
\subsection{Acknowledgments} Authors would like to express their gratitude to Sergiy Maksymenko  for   advices and discussions.

\subsection{Conventions and notations} 
To state our main result we need the notion of wreath products of groups with cyclic groups. So we recall these definitions.
Let $G$ be a group, $m,n\geq 1$ be integers. Consider two effective actions $\alpha:G^n\times \ZZZ_n\to G^n$, and $\beta:G^{nm}\times (\ZZZ_n\times \ZZZ_m)\to G^{nm}$ of $\ZZZ_n$ and $\ZZZ_n\times\ZZZ_{mn}$ on $G^n$ and $G^{nm}$ given by the formulas:
$$
\alpha((g_i)_{i = 0}^{n-1}, a) = (g_{i+a})_{i = 0}^{n-1}, \qquad \beta((g_{i,j})_{i,j=0}^{n-1, m-1}, (b,c)) = (g_{i+b,j+c})_{i,j=0}^{n-1, m-1}
$$
where all indexes are taken modulo $n$ and $n,m$ respectively. With respect to these actions we define semi-direct products $G\wr\ZZZ_n:=G^n\rtimes_{\alpha}\ZZZ_n$ and $G\wr(\ZZZ_n\times\ZZZ_m):=G^{nm}\rtimes_{\beta}(\ZZZ_n\times\ZZZ_m)$   and  call them {\it wreath products} of $G$ with $\ZZZ_n$ and $G$ with $(\ZZZ_n\times\ZZZ_m)$ respectively.
More general definition the reader can find in the book \cite{Meldrum:Longman:1995}.

\subsection{Structure of the paper} Section \ref{sec:Main-result} is devoted to out main result -- Theorem \ref{thm:main}. Sections \ref{sec:Aut-graphs} and \ref{sec:Torus} include some preliminary facts about automorphisms of graphs of Morse functions on surfaces, and ``combinatorial'' structure of such functions on $2$-torus. The proof of Theorem \ref{thm:main} is contained in Section \ref{sec:Proof-Main-Theorem}.

\section{Main result}\label{sec:Main-result}
\subsection{The class $\clP$} First we recall the main result of \cite{MaksymenkoKravchenko:Graphs:2019}. 
For $n\in\NNN$, let $\clP_n$ be a minimal set of isomorphism classes of groups which satisfies the following two conditions:
\begin{enumerate}
	\item the unit group $\{1\}$ belongs to $\clP$,
	\item  groups $A\times B$ and $A\wr \ZZZ_n$ belong to $\clP$ whenever $A, B\in\clP$, and $n\in \NNN$.
\end{enumerate}
Let also $\clP$ be a minimal set of isomorphism classes of groups which contains $\clP_n$ as a set for each $n\in\NNN.$
The following theorem gives the description of the classes $\Gclass(M)$, $\Gclass^{smp}(M)$ and $\Gclass^{gen}(M)$ for all compact surfaces $M\neq S^2, T^2$.
\begin{theorem}[Theorem 1.4 \cite{MaksymenkoKravchenko:Graphs:2019}]\label{thm:classPP2}
	For each compact oriented surface $M\neq S^2, T^2$, $\Gclass(M) = \clP$, and  $\Gclass^{\mathrm{smp}}(M) = \clP_2$. For all compact oriented surface $\Gclass^{gen}(M) = \{1\}$.
\end{theorem}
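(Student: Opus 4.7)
The plan is to split the theorem into its three statements and handle each by separating an upper-bound inclusion (describing which groups can arise as $\Gstab(f)$) from a realization inclusion (showing that every group in the target class is actually attained by some Morse function on $M$).

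I would dispose of the generic case $\Gclass^{\mathrm{gen}}(M) = \{1\}$ first. For a generic Morse function every critical level-set contains at most one critical point, so the $f$-labels of the vertices of the Kronrod-Reeb graph $\KR_f$ are pairwise distinct. Any $\sigma \in \Gstab(f)$ preserves these labels and must therefore fix every vertex of $\KR_f$ and, by checking the local picture at each critical point, also every edge; combined with the orientation information inherited from a diffeomorphism isotopic to $\id_M$, this forces $\sigma = \id$.

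For the inclusion $\Gclass(M) \subseteq \clP$ (and $\Gclass^{\mathrm{smp}}(M) \subseteq \clP_2$), I would argue by induction on the number of edges of $\KR_f$. The key structural observation is that $\KR_f$ carries a cyclic framing at each vertex coming from the embedding $\KR_f \hookrightarrow M$, and every element of $\Gstab(f)$ preserves this framing. Fix an extremal vertex $v$ of $\KR_f$ (say, one realizing the maximum of $f$) and consider its $\Gstab(f)$-orbit $\mathcal{O}$. If $\mathcal{O}$ is a singleton, the subgraph attached at $v$ contributes a direct factor. If $|\mathcal{O}| = n$, then the framing forces the stabilizer of a vertex in $\mathcal{O}$ to act identically on its adjacent subtree, and the subtrees are permuted cyclically, producing a wreath product $A \wr \ZZZ_n$, with $A \in \clP$ by induction. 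In the simple case every vertex of $\KR_f$ has valency $\leq 3$, so only the cyclic group $\ZZZ_2$ can appear as the permutation factor, yielding $\clP_2$.

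For the realization $\clP \subseteq \Gclass(M)$ I would proceed by induction along the defining rules of $\clP$. The trivial group is realized by any sufficiently ``asymmetric'' Morse function on $M$. Given realizations $f_A, f_B$ of $A, B \in \clP$ on small disks (or subsurfaces) embedded in $M$, one realizes $A \times B$ by placing both patterns in disjoint subdomains of $M$ whose complement carries a function with trivial graph-symmetry. To realize $A \wr \ZZZ_n$, one takes $n$ isotopic copies of a disk neighborhood realizing $A$ and arranges them around a central curve along which a $\ZZZ_n$-symmetry is explicit; the copies are glued via a ``hub'' function so that the obvious cyclic rotation extends to a diffeomorphism of $M$ that is isotopic to $\id_M$. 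The restriction $M \neq S^2, T^2$ is precisely what guarantees enough genus/complexity to carry out these gluings while keeping all symmetries inside $\Diff_{\id}(M)$. For the simple case the same construction applies with $n = 2$ throughout.

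The main obstacle is twofold. First, in the inductive decomposition step, one must verify rigorously that the framing plus the isotopy-to-identity condition really rule out all other group extensions (e.g. non-cyclic permutation actions at a vertex), which requires a careful analysis of how $\Diff_{\id}(M)$-isotopies act on the local cyclic ordering of edges at each critical point. Second, in the realization step, checking that the constructed symmetries extend to diffeomorphisms isotopic to $\id_M$ (rather than merely to orientation-preserving diffeomorphisms) is the delicate point; this is exactly where the exclusion of $S^2$ and $T^2$ enters, since on those surfaces $\Diff_{\id}(M)$ is not contractible and a naive gluing need not produce an isotopically trivial symmetry.
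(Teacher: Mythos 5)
This statement is not proved in the paper at all: it is quoted verbatim as Theorem~1.4 of \cite{MaksymenkoKravchenko:Graphs:2019} and used as an external input (in Section~\ref{sec:Proof-Main-Theorem} it supplies the functions $f_i$ on disks with $G(f_i)\cong B$). So there is no internal proof to compare your attempt against; what follows is an assessment of your sketch on its own terms. Your overall architecture (upper bound by induction on the graph, realization by induction on the generating rules of $\clP$, annular arrangement of $n$ copies for $A\wr\ZZZ_n$) is consistent with the strategy of the cited reference, but two steps as written have real gaps.

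First, the generic case. Distinctness of critical values forces an element of $\Gstab(f)$ to fix every vertex of $\KR_f$, but not every edge: the Kronrod--Reeb graph can have parallel edges (already for a generic function on $T^2$ or a genus-$2$ surface the graph contains a cycle with double edges), and at a degree-$3$ saddle vertex the two co-oriented edges can a priori be transposed while all vertices stay fixed. Ruling this out is exactly where one must use that the diffeomorphism lies in $\Stab'(f)=\Stab(f)\cap\Diff_{\id}(M)$, i.e.\ an argument about the action on the separatrices of the saddle and on $H_1(M)$; ``orientation information'' alone does not close this. Second, and more importantly, your diagnosis of why $S^2$ and $T^2$ are excluded is backwards. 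For genus $\geq 2$ the group $\Diff_{\id}(M)$ is contractible and the construction of the $\ZZZ_n$-rotation supported in an annulus \emph{is} isotopic to the identity (shrink the rotation angle); the realization step is not where the hypothesis is used. The surfaces $S^2$ and $T^2$ are excluded because there $\Diff_{\id}(M)$ is homotopically larger, so \emph{more} symmetries (global rotations of the sphere, translations/rotations of the torus) become isotopically trivial and the resulting classes are strictly bigger than $\clP$ --- which is precisely the content of Theorem~\ref{thm:main}, where $\Gclass_0(T^2)=\E_0$ contains the groups $A\wr(\ZZZ_n\times\ZZZ_{mn})\notin\clP$. A correct proof of the upper bound $\Gclass(M)\subseteq\clP$ must therefore use the genus-$\geq 2$ hypothesis to \emph{exclude} these extra wreath products, not to enable gluings; your sketch never does this.
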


\subsection{Classes of groups $\E_1$, $\E_2$ and $\E_2$}
To describe classes $\Gclass(T^2)$ and $\Gclass^{smp}(T^2)$ we introduce three  classes $\E_0$, $\E_1$ and $\mathcal{E}_2$ in the following way. 
Let $\E_i$ be a  minimal set of isomorphism classes of groups such that
\begin{itemize}
	\item $\E_0$ contains the group $A_0\wr(\ZZZ_n\times\ZZZ_{mn})$, $n,m\geq 1$ for each $A_0\in \clP$,
	\item $\E_1$ contains the group $A_1\wr\ZZZ_n$, $n\geq 1$ for each $A_1\in \clP$,
	\item $\E_2$ contains the group $A_2\wr\ZZZ_n$, $n\geq 1$ for each $A_2\in \clP_2$.
\end{itemize}
\begin{remark}{\rm
Note that the group $(A\wr\ZZZ_2) \times (B\wr\ZZZ_3)$ where $A,B\in \clP$  belongs to $\E_1$ since it is isomorphic to the group $((A\wr\ZZZ_2) \times (B\wr\ZZZ_3)) \wr\ZZZ_1$.
So classes  $\clP$ and $\E_1$ in general coincide, but we will consider the class $\E_1$ due to the convenience of a unique presentation of a group $G\in \E_1$ in the form $G = H\wr\ZZZ_n$ for some $n\geq 1$ and $H\in\clP$.}
\end{remark}

It is well known that  the Kronrod-Reeb graph of Morse functions on $2$-torus is either a tree or contains a unique circuit, see Lemma 3.1 in \cite{Feshchenko:DefFuncI:2019}. The class of groups $G(f)$ for Morse functions  $f$ on $T^2$ whose graph are trees we will denote by $\Gclass_0(T^2)$, otherwise, in the case of circuits, by $\Gclass_1(T^2)$\footnote[1]{Here indexes $0$ and $1$ correspond to the rank of $H_1(\Gamma_f,\ZZZ)$}. The following theorem is our main result.
\begin{theorem}\label{thm:main}
	The following classes coincide: 
	$$\Gclass_0(T^2) = \E_0,\qquad \Gclass_1(T^2) = \E_1,\qquad \Gclass^{gen}(T^2) = \E_2.$$
\end{theorem}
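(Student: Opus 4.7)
The plan is to prove each of the three equalities $\Gclass_i(T^2)=\E_i$ by separately establishing both inclusions: a \emph{combinatorial inclusion} $\Gclass_i(T^2)\subseteq\E_i$ extracting the wreath-product structure from the graph $\Gamma_f$, and a \emph{realization inclusion} $\E_i\subseteq\Gclass_i(T^2)$ exhibiting explicit Morse functions. Throughout I would use that $G(f)=\rho(\Stab'(f))$ inside $\Aut(\Gamma_f)$, where $\Stab'(f)=\Stab(f)\cap\Diff_{\id}(T^2)$, and that by Lemma~3.1 of \cite{Feshchenko:DefFuncI:2019} the graph $\Gamma_f$ is either a tree or contains exactly one simple cycle.

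For the circuit case $\Gclass_1(T^2)=\E_1$, one picks a regular value of $f$ in the interior of an edge lying on the unique cycle of $\Gamma_f$ and cuts $T^2$ along its preimage. The resulting cylinder carries an induced Morse function whose Kronrod-Reeb graph is a tree; Theorem~\ref{thm:classPP2}, applied to this cylinder (e.g.\ after doubling into a higher-genus surface), describes its induced stabilizer group as some $A_1\in\clP$. Upon re-gluing, an $f$-preserving diffeomorphism of $T^2$ isotopic to $\id$ that acts nontrivially on the cycle must cyclically permute the $n$ fundamental pieces covering the cycle, producing a short exact sequence $1\to A_1^n\to G(f)\to\ZZZ_n\to 1$ that splits equivariantly, hence $G(f)\cong A_1\wr\ZZZ_n$. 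For the converse I would start with a Morse function on a cylinder realizing an arbitrary $A_1\in\clP$ (again by Theorem~\ref{thm:classPP2}) and glue $n$ rotated copies around a torus so that rotation by $2\pi/n$ is an $f$-preserving isotopic-to-identity diffeomorphism.

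For the tree case $\Gclass_0(T^2)=\E_0$, the key structural observation is that a tree Kronrod-Reeb graph on a genus-$1$ surface forces two independent commuting cyclic symmetries of $f$: an $n$-fold permutation of parallel annular pieces covering $T^2$, and an $mn$-fold permutation of layers inside each annulus. I would extract these by iteratively cutting $T^2$ along preimages of regular values to reach a fundamental cell; Theorem~\ref{thm:classPP2} identifies its inner stabilizer as some $A_0\in\clP$, while the two commuting translations assemble into the outer $\ZZZ_n\times\ZZZ_{mn}$, giving $G(f)\cong A_0\wr(\ZZZ_n\times\ZZZ_{mn})$. Realization is by arranging $nm$ equivariant copies of a Morse function on a cell (realizing $A_0$) in a grid pattern on $T^2$, with the obvious lattice translations as outer symmetries. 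The generic case $\Gclass^{gen}(T^2)=\E_2$ follows by restricting to functions with at most one critical point per level: genericity forbids the doubly-periodic tree arrangement (two critical points of equal value on one annulus become forbidden), collapsing the outer factor to a single $\ZZZ_n$, while the inner factor must lie in $\clP_2$ exactly as in the generic part of Theorem~\ref{thm:classPP2}.

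The main obstacle will be the tree case inclusion $\Gclass_0(T^2)\subseteq\E_0$, specifically proving that the outer factor must be a product of two \emph{independent} cyclic groups rather than a single cyclic group or a more exotic abelian extension. This requires careful control of how automorphisms of a tree Kronrod-Reeb graph lift to the two lattice directions of $T^2$: one must rule out ``diagonal'' mixings, show the two cyclic actions genuinely commute inside $G(f)$, and verify that every genus-$1$ Morse function with tree graph and nontrivial $G(f)$ is $f$-equivariantly equivalent to the standard $n\times mn$ grid construction up to isomorphism of groups.
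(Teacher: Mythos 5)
Your realization constructions (gluing $n$ cyclically permuted cylinder pieces for the circuit case; a doubly periodic grid of cells with lattice translations $\ZZZ_n\times\ZZZ_{mn}$ for the tree case, then inserting a function realizing $B\in\clP$ on one orbit of cells via Theorem \ref{thm:classPP2}) are essentially the ones the paper uses, so the inclusions $\E_i\hookrightarrow\Gclass_i(T^2)$ are handled the same way. The main structural difference is in the forward inclusions $\Gclass_i(T^2)\hookrightarrow\E_i$: the paper does not reprove the decomposition $G(f)\cong G(f|_{Q_0})\wr\ZZZ_n$ (circuit case) and $G(f)\cong\prod_i G(f|_{D_i})\wr G_v^{loc}$ with $G_v^{loc}\cong\ZZZ_n\times\ZZZ_{mn}$ (tree case); it simply cites Lemma \ref{prop:GraphT} (Theorem 3.2 of the reference on deformations of functions on $T^2$), after which the inclusion is immediate from the definition of $\E_i$. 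You instead propose to derive this structure from scratch by cutting along regular level sets, and you correctly identify the tree case (ruling out diagonal mixings, proving the outer group is exactly $\ZZZ_n\times\ZZZ_{mn}$) as the hard point --- but you leave it as an announced obstacle rather than an argument. Since that result is available by citation, this is a matter of economy rather than a fatal flaw; still, as written your forward inclusion for the tree case is not complete. One small slip: the double of a cylinder is a torus, which is excluded from Theorem \ref{thm:classPP2}; no doubling is needed, since the cylinder (or a disk around a maximum, as the paper uses) is itself a compact oriented surface distinct from $S^2$ and $T^2$.

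The genuine gap is in the third equality. The substance of that case (the paper's Case 3 concerns \emph{simple} Morse functions) requires showing that a simple Morse function on $T^2$ can never have a tree as its Kronrod--Reeb graph; otherwise the inclusion into $\E_2$ fails, because a tree-type function would contribute a group of the form $A\wr(\ZZZ_n\times\ZZZ_{mn})$ rather than $A\wr\ZZZ_n$ with $A\in\clP_2$. The paper proves this (Lemma \ref{lm:simple}) by collapsing each disk in the complement of the atom of the special component $V$ to a single extremum, observing that the resulting function $g$ is still simple with all saddles on $V$, so simplicity forces $c_1(g)=1$, while the Morse equality $c_0(g)+c_2(g)=c_1(g)$ on $T^2$ forces $c_0(g)+c_2(g)=1$, a contradiction. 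Your substitute --- ``genericity forbids the doubly-periodic tree arrangement'' --- is not an argument for simple functions: a simple function may well have many critical points on one level, so nothing is ``forbidden'' at the level of generality you invoke, and no numerical obstruction is produced. You need the Morse-equality count (or an equivalent Euler-characteristic argument) to close this case.
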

We prove Theorem \ref{thm:main} in Section \ref{sec:Proof-Main-Theorem}. The proof is divided into three separate cases: for $\Gclass_0(T^2)$, $\Gclass_1(T^2)$ and $\Gclass^{gen}(T^2)$.

\section{Automorphisms of graphs of functions on surfaces}\label{sec:Aut-graphs}
In this section we want to show a precise way how the group $G(f)$ arises from the action of $\mathcal{S}'(f) = \mathcal{S}(f)\cap \mathcal{D}_{\id}(M)$ on $M$.
Let $f:M\to \RRR$ be a Morse function on smooth oriented surface $M$, and $c$ be a real number.  A connected component $C$ of the level-set $f^{-1}(c)$ is called {\it critical}, if $C$ contains  at most one critical point of $f$, otherwise $C$ is called {\it regular}. Let $\Delta$ be a partition of $M$ into connected components of level-sets of $f$. It is well known that the quotient-space $\KR_f = M/\Delta$ has a structure of an $1$-dimensional  CW complex called a  Kronrod-Reeb graph of $f$. For simplicity we will call it a graph of $f$. 
Let also $p_f:M\to \Gamma_f$ be a  projection map. Then $f$ can be presented as the composition: $f = \widehat{f} \circ p_f: M \stackrel{~~p_f~~}{\longrightarrow} \KR_f \stackrel{~~\widehat{f}~~}{\longrightarrow}\RRR$.
Denote by $\Aut(\KR_f)$ the group of homeomorphisms of the graph $\KR_f$. Note that each $h\in \Stab'(f)$ preserves level-sets of $f$. Hence, $h\in \Stab'(f)$ induces the homeomorphism $\rho(h)$ of $\KR_f$ such that the following diagram 
$$
\xymatrix{
M\ar[r]^{p_f} \ar[d]_h & \KR_f \ar[r]^{\widehat{f}} \ar[d]_{\rho(h)} &\RRR \ar@{=}[d]\\
M\ar[r]^{p_f} & \KR_f \ar[r]^{\widehat{f}} & \RRR
}
$$
commutes, and the correspondence $h\mapsto \rho(h)$ is a homeomorphism $\rho:\Stab'(f)\to \Aut(\KR_f)$.
The image $\rho(\Stab'(f))$ is a finite group in $\Aut(\KR_f)$; we will denote it by $\Gstab(f)$.

\section{Combinatorial generalities on Morse functions on $2$-torus and  their graphs}\label{sec:Torus}
We are interested in the ``combinatorial'' structure of Morse functions on $T^2$, so we will recollect some useful for us results on the structure of such functions. The following lemma holds.
\begin{lemma}[Lemma 3.1 \cite{Feshchenko:DefFuncI:2019}]
Let $f$ be a Morse function on $T^2$ and $\Gamma_f$ be its graph.  Then $\Gamma_f$ is either a tree or contains a unique circuit.
\end{lemma}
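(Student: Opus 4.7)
The plan is to identify $\pi_1(\Gamma_f)$ as a quotient of $\pi_1(T^2)\cong \ZZZ^2$ and then use commutativity of the target to bound its rank. Since $\Gamma_f$ is a connected finite $1$-dimensional CW-complex, $\pi_1(\Gamma_f)$ is a free group of rank equal to $b_1(\Gamma_f)$, the number of independent circuits. So establishing $b_1(\Gamma_f)\leq 1$ will suffice.

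The main step is to show that the induced map $p_{f*}\colon \pi_1(T^2)\to \pi_1(\Gamma_f)$ is surjective. Given an edge-loop $\gamma$ in $\Gamma_f$, I would proceed as follows. Over the interior of each edge $e$, the preimage $p_f^{-1}(\mathrm{int}(e))$ is an open cylinder $S^1\times \mathrm{int}(e)$ on which $p_f$ is the projection to the second factor (by Ehresmann's theorem applied to the restriction of $f$ to the regular part), so one picks a continuous section $s_e$. At each intermediate vertex $v$ traversed by $\gamma$, the endpoints of two consecutive sections land in the fiber $p_f^{-1}(v)$, which by definition is a single connected component of a critical level set of $f$; since this component is a connected finite graph, it is path-connected, so the endpoints can be joined by an arc inside the fiber. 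Concatenating these pieces (and closing up inside the initial fiber) yields a loop $\widetilde\gamma\subset T^2$ with $p_f\circ \widetilde\gamma$ homotopic to $\gamma$.

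Once surjectivity is in hand, the rest is formal: any quotient of $\ZZZ^2$ is abelian, so $\pi_1(\Gamma_f)$ is an abelian free group, hence of rank at most $1$. Consequently $\Gamma_f$ is either a tree (rank $0$) or contains a unique independent circuit (rank $1$), which is the dichotomy of the lemma.

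The main obstacle is the careful set-up of the lift: one needs both the local triviality of $p_f$ over the interior of each edge (so that sections exist and depend continuously on the edge parameter) and the path-connectedness of fibers over vertices. Both facts are standard for Morse functions on compact surfaces, but they do require explicit justification via the Morse lemma near saddles and the fact that critical level components are compact finite graphs. An alternative route would be a direct Euler-characteristic count using the numbers of minima, maxima and the two types of saddles (merging vs.\ non-merging), combined with $\chi(T^2)=0$; this is more elementary but less clean than the $\pi_1$ surjection argument above.
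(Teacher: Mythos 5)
Your argument is correct in outline, but note that the paper you were asked about does not actually prove this statement: it quotes it as Lemma~3.1 of \cite{Feshchenko:DefFuncI:2019}, so there is no in-paper proof to compare against. Your route --- show that $p_{f*}\colon\pi_1(T^2)\to\pi_1(\KR_f)$ is onto, conclude that $\pi_1(\KR_f)$ is an abelian free group, hence of rank at most one, hence $\KR_f$ is a tree or has exactly one circuit --- is the standard epimorphism argument for Reeb graphs (it is essentially how the bound ``number of loops $\le$ genus'' is obtained in the literature, e.g.\ Cole-McLaughlin et al.\ and Michalak), and all the group-theoretic steps are sound: a quotient of $\ZZZ^2$ is abelian, a nonabelian free group has rank $\ge 2$, and a finite connected graph with first Betti number $1$ has a unique circuit.

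The only place where your write-up needs tightening is the lifting step, and you partially flagged it yourself. A section of $p_f$ over the \emph{open} edge $\mathrm{int}(e)$ need not have limits as the parameter tends to the endpoints, so ``the endpoints of two consecutive sections land in the fiber $p_f^{-1}(v)$'' is not literally available. Two standard fixes: (i) trivialize via a gradient-like flow, whose trajectories do converge to points of the critical level component, so the closure of the section is a compact arc meeting $p_f^{-1}(v)$; or, cleaner, (ii) lift only over the parts of the edges lying outside small stars of the vertices, and join consecutive lifted arcs inside the atom $N=p_f^{-1}(\st(v))$, which is a connected (hence path-connected) regular neighborhood of the connected fiber $V=p_f^{-1}(v)$; the projected loop then agrees with $\gamma$ outside the stars and stays inside the stars otherwise, so it is homotopic to $\gamma$. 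With either repair the surjectivity of $p_{f*}$, and hence the lemma, follows as you describe; your alternative Morse-equality count would also work but requires a separate case analysis of saddle types.
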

\noindent We describe these two cases separately.

\subsection{$\Gamma_f$ contains a circuit}
\label{subsec:circ}
 Let $\Theta$ be a circuit in $\Gamma_f$.
Let $C_0\subset T^2$ be a regular connected component of some level set $f^{-1}(c)$, $c\in \RRR$, and $z$ be a point in $\KR_f$ corresponding to $C_0$. Obviously, $z$ belongs to the cycle $\Theta$ in $\KR_f$, iff $C_0$ does not separate $T^2$. Note that the level-set $f^{-1}(c)$ consists of a finite number of connected components, and is invariant under the action of any $h\in\Stab'(f)$. Let $\mathcal{C}$ be the set $\{h(C_0)\,|\, h\in\Stab'(f) \}$ of all images of $C_0$ under the action of elements from $\Stab'(f)$. Then the set $\mathcal{C}$ consists of a finite number of components $\{C_0,C_1,\ldots,C_{n-1}\}$ of the set $f^{-1}(c)$ for some $n\geq 1$. Curves from $\mathcal{C}$ are pairwise disjoint, and since $C_0$ does not separate $T^2$, it follows that each $C_i$ also does not separate $T^2$. Then $C_i$ and $C_{i+1}$ bounds a cylinder $Q_i$ such that the interior of $Q_i$ does not intersects with $\mathcal{C}.$
Note that the group $\ZZZ_n$ freely acts on the set of cylinders $\{Q_i\}$ by cyclic permutations. More about combinatorial description of Morse functions on $2$-torus whose graphs contain circuits the reader can find in \cite{MaksymenkoFeshchenko:2014:HomPropCycle, MaksymenkoFeshchenko:2015:HomPropCycleNonTri}.

\subsection{$\Gamma_f$ is a tree}
\label{subsec:tree}
 Let $f$ be a Morse function on $T^2$ with $\Gamma_f$ which is a tree. Then by \cite[Theorem~2.5]{Feshchenko:2014:HomPropTreeNonTri} there exists a unique vertex $v\in \Gamma_f$ such that each connected component of $T^2 - p_f(v)$ is an open $2$-disk.
Such vertex $v$ of $\Gamma_f$ and the connected component  $V = p_f^{-1}(v)$ of $f$ which corresponds to $v$ will be called {\it special}. Note that the  topological structure of the atom of $V$, i.e., a regular neighborhood of $V$ which consists of connected components of level-set of $f$ and does not contain other critical points of $f$, is well understood \cite{Feshchenko:2016:ActTree}.

Next we describe a special subgroup of $G(f)$ which plays an important role in an algebraic description of $G(f)$. The group $\Gstab(f)$ acts on the graph $\KR_f$, and we let 
$\Gstab_v$  be a stabilizer of $v$ with respect to this $\Gstab$-action on $\KR_f$. The set $\Gstab_v^{loc} = \{ g|_{\st(v)}\,|\, g\in \Gstab_v \}$ which contains restrictions of elements of $\Gstab_v$ onto the star $\st(v)$ is a subgroup of $\Aut(\st(v))$,  we will call it a {\it local stabilizer of $v$}. It is well known that the group $G_v^{loc}$ is isomorphic to the product $\ZZZ_n\times \ZZZ_{mn}$ for some $n,m\geq 1$, see \cite[Theorem~2.5]{Feshchenko:2014:HomPropTreeNonTri}. More informations the reader can find in \cite{Feshchenko:2016:ActTree, MaksymenkoFeshchenko:2014:HomPropTree}.

\subsection{Algebraic structure of $G(f)$}
It turns out that the combinatorial information from the subsections \ref{subsec:circ} and \ref{subsec:tree} is enough to describe   an algebraic structure of the groups $G(f)$ for Morse functions on $T^2$.
\begin{lemma}[Theorem 3.2 \cite{Feshchenko:DefFuncI:2019}]
	
	\label{prop:GraphT} 
	Let $f$ be a Morse function on $T^2$, and $\Gamma_f$ be its graph. 
	\begin{enumerate}
				\item 	If  $\KR_f$ contains a unique circle,  then the group $\ZZZ_n$ freely acts on the set of cylinders $\{Q_i\}_{i = 0}^{n-1}$ and there is an isomorphism  $\Gstab(f) = \Gstab(f|_{Q_0})\wr \ZZZ_n$, where $n$ is a cyclic index of $f$,and $Q_0$ is a cylinder bounded by parallel curves $C_0$ and $C_1$.
		\item 
		If $\KR_f$ is a tree,  then there exists a special vertex $v$ in $\Gamma_f$. Let also $V = p_f^{-1}(v)$ be a special component of critical level set of $f$ which corresponds to $v$ and $N$ be an atom of $V$.
		Moreover the group $G_v^{loc}$  freely acts by diffeomorphisms from $\mathcal{S}'(f)$ on $T^2$, 
		this action induces a free action of $G_v^{loc}$ on connected components of $\overline{T^2-N}$,
		 and  so
		 there exists a set of  $2$-disks $\{ D_0,D_1,\ldots, D_r\}$ such that $\Gstab(f) = \prod_{i = 0}^{r} \Gstab(f|_{D_i})\wr G_v^{loc}$, where $r$ is a number of orbits of free $\Gstab$-action on $T^2$.
	\end{enumerate}
\end{lemma}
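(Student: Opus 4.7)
The plan is to analyze, in each of the two cases, a natural finite set on which $\Gstab(f)$ acts as a permutation group, identify the pointwise stabilizer of this action, and then exhibit a splitting that realizes the wreath-product presentation.

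For part (1), I would first use the finiteness of $\Gstab(f)$ (hence of the orbit $\mathcal{C} = \{C_0,\ldots,C_{n-1}\}$ described in Section~\ref{subsec:circ}) to note that the cylinders $\{Q_0,\ldots,Q_{n-1}\}$ are cyclically ordered along the circuit $\Theta\subset \Gamma_f$ and that $\Gstab(f)$ respects this cyclic order. This gives a surjective homomorphism $\pi:\Gstab(f)\to \ZZZ_n$. Its kernel consists of graph automorphisms fixing every edge of $\Theta$ coming from some $Q_i$, and such an automorphism decomposes as an independent choice of $\rho(h_i)\in \Gstab(f|_{Q_i})$ for each $i$, giving $\ker\pi\cong \prod_{i=0}^{n-1}\Gstab(f|_{Q_i}) \cong \Gstab(f|_{Q_0})^{n}$ (since all cylinders are $\Stab'(f)$-equivalent). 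Finally I would produce a section by exhibiting a diffeomorphism $\sigma\in\Stab'(f)$ with $\sigma(Q_i)=Q_{i+1}$: such a $\sigma$ exists because $C_0$ and $C_1$ are conjugated by some element of $\Stab'(f)$, and a cyclic-shift can be assembled from its powers composed with $f$-preserving isotopies on the cylinders. The existence of $\sigma$ also gives freeness of the $\ZZZ_n$-action on $\{Q_i\}$, and combining with the kernel description yields $\Gstab(f)=\Gstab(f|_{Q_0})\wr\ZZZ_n$.

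For part (2), the starting point is that the special vertex $v$ from \cite[Thm.~2.5]{Feshchenko:2014:HomPropTreeNonTri} is unique, so it is fixed by every $g\in\Gstab(f)$; consequently $\Gstab(f)=\Gstab_v$. The restriction map $r:\Gstab_v\to \Aut(\st(v))$ has image exactly $\Gstab_v^{\mathrm{loc}}\cong\ZZZ_n\times\ZZZ_{mn}$ by the same cited theorem. I would then realize each element of $\Gstab_v^{\mathrm{loc}}$ by an actual $f$-preserving diffeomorphism of $T^2$ using the description of the atom of $V$ from \cite{Feshchenko:2016:ActTree}; this produces the asserted free $\Gstab_v^{\mathrm{loc}}$-action on $T^2$, which in turn induces a free action on the set of $2$-disk components of $\overline{T^2-N}$ (freeness on these disks follows from the fact that the $\ZZZ_n\times\ZZZ_{mn}$-action on the edges of $\st(v)$ is free, as those edges correspond bijectively to the components). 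Picking orbit representatives $D_0,\ldots,D_r$, the kernel $\ker r$ consists of automorphisms fixing each edge of $\st(v)$, hence decomposes as $\prod_{i=0}^{r}\Gstab(f|_{D_i})^{|\Gstab_v^{\mathrm{loc}}|}$, and the diffeomorphism lifts of $\Gstab_v^{\mathrm{loc}}$ provide a section. This gives exactly $\Gstab(f)=\bigl(\prod_{i=0}^r \Gstab(f|_{D_i})\bigr)\wr \Gstab_v^{\mathrm{loc}}$.

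The main obstacles are not the group-theoretic bookkeeping but the two geometric lifting steps: producing an honest $f$-preserving diffeomorphism $\sigma$ of $T^2$ cyclically permuting the $Q_i$ in case (1), and realizing each abstract element of $\Gstab_v^{\mathrm{loc}}$ by a diffeomorphism in $\Stab'(f)$ that freely permutes the complementary disks in case (2). Both reduce to combining the combinatorial symmetry already present in $\Gstab(f)$ with $f$-preserving isotopies supported on neighborhoods of the regular level curves $C_i$ or on the atom $N$ of $V$; the structure theorem for atoms of special components from \cite{Feshchenko:2016:ActTree} should supply the local models needed to build these lifts. Once the lifts and the freeness assertions are in hand, the wreath-product presentations follow formally from the kernel-plus-section decomposition sketched above.
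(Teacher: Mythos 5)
The paper itself gives no proof of this lemma: it is imported verbatim as Theorem~3.2 of \cite{Feshchenko:DefFuncI:2019} (which in turn rests on \cite{MaksymenkoFeshchenko:2015:HomPropCycleNonTri, Feshchenko:2014:HomPropTreeNonTri, Feshchenko:2016:ActTree}), so there is no internal argument to compare yours against. Your kernel--quotient--section skeleton is indeed the strategy of those references, and the bookkeeping (uniqueness of the special vertex forcing $\Gstab(f)=\Gstab_v$, the kernel of the restriction to $\st(v)$ splitting as a product over the complementary disks, the orbit count giving the exponent $|\Gstab_v^{loc}|$) is correct.

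Two steps in your sketch carry essentially all of the content and are not yet proofs. First, in part (1) you assert without justification that $\Gstab(f)$ ``respects the cyclic order'' of the cylinders, i.e.\ acts through $\ZZZ_n$ rather than the dihedral group. This is exactly where the hypothesis $h\in\Stab'(f)=\Stab(f)\cap\Diff_{\id}(T^2)$ (and not merely $h\in\Stab(f)$) must enter: since $h$ acts trivially on $H_1(T^2;\ZZZ)$ and the $C_i$ are parallel non-separating curves, an order-reversing permutation of the $Q_i$ would send a transverse cycle to its negative, which is impossible; without this the quotient could a priori be dihedral. Second, the existence of the section is not formal: for a non-abelian kernel $A^n$ the extension $1\to A^n\to\Gstab(f)\to\ZZZ_n\to 1$ with the shift action need not split (the Shapiro-type vanishing that guarantees splitting only applies when $A$ is abelian), so ``a cyclic-shift can be assembled from its powers composed with $f$-preserving isotopies'' is precisely the theorem being proved, not a routine adjustment. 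One must actually construct an $f$-preserving diffeomorphism inducing an order-$n$ automorphism of $\KR_f$ (and, in part (2), realize all of $\ZZZ_n\times\ZZZ_{mn}$ by $f$-preserving diffeomorphisms acting freely); for part (2) this is the content of \cite[Theorem~2.5]{Feshchenko:2014:HomPropTreeNonTri} and \cite{Feshchenko:2016:ActTree}, which you may legitimately cite, but for part (1) the periodic diffeomorphism has to be built (or \cite{MaksymenkoFeshchenko:2015:HomPropCycleNonTri} cited) rather than asserted. With those two points supplied, your outline closes up into the intended proof.
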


\subsection{Morse equality} 
We would like to recall the relationship between  Euler characteristic of the surface and Morse function  defined on it. This  connection is given by the outstanding Morse equality -- it is an important ingredient needed for the proof of Theorem \ref{thm:main}. 
\begin{theorem}[Morse equality]\label{thm:Morse}
	Let $f$ be a Morse function on smooth compact and oriented surface $M$ without boundary and let $c_i(f)$ be a number of critical points of $f$ of index $i$, $i = 0,1,2$. Then the following equality holds
	$$
	\chi(M) = c_0(f) - c_1(f) + c_2(f).
	$$
\end{theorem}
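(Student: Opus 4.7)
The plan is to reduce the statement to either the standard CW-structure from Morse theory or to the Poincar\'e--Hopf index theorem, both of which are classical. My preferred route is the CW-approach, since it produces an explicit cellular model of $M$ whose Euler characteristic is visibly $c_0(f) - c_1(f) + c_2(f)$.

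First, I would fix a Riemannian metric on $M$ and consider the negative gradient flow of $f$. By the fundamental theorem of Morse theory, if $c$ is a critical value and $p$ the (unique or non-unique) critical point(s) on $f^{-1}(c)$ of indices $i_1, \dots, i_k$, then for small enough $\varepsilon > 0$ the sublevel set $M^{c+\varepsilon} = f^{-1}((-\infty, c+\varepsilon])$ is homotopy equivalent to $M^{c-\varepsilon}$ with cells of dimensions $i_1, \dots, i_k$ attached along their boundary spheres. Since $M$ is compact and $f$ has finitely many critical points, iterating across all critical values yields a finite CW-complex $X$ homotopy equivalent to $M$ whose number of cells of dimension $i$ equals $c_i(f)$ for $i = 0, 1, 2$.

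Next I would invoke the homotopy invariance of the Euler characteristic, $\chi(M) = \chi(X)$, together with the cellular formula $\chi(X) = \sum_{i=0}^{2} (-1)^i \#\{i\text{-cells of } X\}$. Substituting, $\chi(M) = c_0(f) - c_1(f) + c_2(f)$, as claimed. A slicker alternative is the Poincar\'e--Hopf theorem applied to $\nabla f$: the zeros of $\nabla f$ are exactly the critical points of $f$, all non-degenerate, and at each critical point $p$ of Morse index $i$ the local index of $\nabla f$ equals $(-1)^i$ because the Hessian contributes $i$ negative eigenvalues. Summing over all critical points then gives the result.

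There is no substantive obstacle here; the only care needed is to handle the case where several critical points occupy the same level (which the sublevel-set argument handles by perturbing $\varepsilon$ or by attaching the cells simultaneously) and to check the index computation for a non-degenerate zero of a gradient, which is a standard linear-algebra exercise. For the application to Theorem \ref{thm:main}, the relevant consequence is the identity $0 = \chi(T^2) = c_0(f) - c_1(f) + c_2(f)$, which will be used to constrain the admissible numerical profiles of Morse functions on the torus.
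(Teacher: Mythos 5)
Your argument is correct and is the standard proof of the Morse equality; the paper itself states this as a classical fact and offers no proof, so there is nothing to compare against. Both of your routes (the CW-model from attaching cells across critical levels, and Poincar\'e--Hopf applied to $\nabla f$ with local index $(-1)^i$ at a critical point of index $i$) are complete and adequate for the use made of the identity $c_0(f)+c_2(f)=c_1(f)$ on $T^2$.
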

It is well known that $\chi(T^2) = 0$,  so Morse equality for Morse functions on $T^2$ has the form: $c_0(f)  + c_2(f) = c_1(f)$.

\section{Proof of Theorem \ref{thm:main}}\label{sec:Proof-Main-Theorem}
Obviously that the inclusion $\Gclass_i(T^2)\hookrightarrow \E_i$ for $i = 0,1$ directly follows from Lemma  \ref{prop:GraphT} because of the structure of the class $\E_i$. So to prove Theorem \ref{thm:main} we have to establish the reverse inclusion $\E_i\hookrightarrow \Gclass_i(T^2)$ for $i = 0,1$. In other words we need to prove that for each $A\in \E_i$ there exists a Morse function $f$ on $T^2$ such that $A\cong G(f)$. These will be done bellow in Subsections \ref{subsec:treecirc} and \ref{subsec:cyclecirc}.
The case of simple Morse functions will be considered in Subsection \ref{subsec:cimple}.
\subsection{Case 1: Functions, whose graphs have circuits}\label{subsec:treecirc} Let $A$ be a group from $\E_1$. So the group $A$ has the form $B\wr \ZZZ_n$ for some $B\in \clP$ and some $n\geq 1.$ We divide the proof of the inclusion $\E_1\hookrightarrow \Gclass_1(T^2)$ into two steps.
First we have to define a Morse function function $f_0:T^2\to \RRR$ such that $\Gamma_{f_0}$ contains a circuit and $G(f_0)\cong \ZZZ_n$ for $n$ as above. Then for the given group $B$ we change $f_0$ on the neighborhood of maxima of $f_0$  to obtain another Morse function $f:T^2\to \RRR$ such that $G(f)\cong B\wr \ZZZ_n$.

Such Morse function $f_0$  one can define by the following procedure. Let $Q_i\cong S^1\times [0,1]$, $i = 0,\ldots, n-1$ be a cylinder and $g_i:Q_i\to \RRR$ be a Morse function such that $g_i$ has one maximum $l_i$, one minimum $p_i$, two saddles $s_{i1}, s_{i2}$ and  
$$
g(\partial Q_i) = 0,\qquad g(l_i) = 1,\qquad g(p_i) = -1,\qquad g(s_{ij}) = \pm 1/2, \;j = 1,2.
$$
Consider the same orientation on each  $Q_i$; it canonically induces the orientations of the connected components of the boundary $\partial Q_i = \partial Q_i^{0}\cup \partial Q_i^{1},$ where $\partial Q_i^{j} = S^1\times \{j\}$, $j = 0,1$.

If $n = 1$ attach the boundaries of $Q_0$ by identity diffeomorphism. The resulting surface is a $2$-torus. Since the values of $g$ on $\partial Q_0^0$ and $\partial Q_0^1$ coincides, it follows that $g_0$ induces a unique Morse function $f_0$ on $T^2$.
If $n\geq 1$ attach all $Q_i$ together in cyclic order by identity diffeomorphism of $\partial Q_i^{1}\to \partial Q_{i+1}^0$ where the index $i$ takes modulo $n$. The resulting surface is obviously a $2$-torus, and since the values $g_i$ on connected components of the boundary of $Q_i$ coincides, it follows that $g_0$ induces a unique smooth function $f_0$ on $T^2$ such that $\Gamma_{f_0}$ contains a circuit and $G(f_0)\cong \ZZZ_n$. Points $l_i$, $p_i$ and $s_{ij}$ are the corresponding maxima, minima and saddle points of $f_0$, $i = 0,\ldots n-1$, $j = 1,2$.

Let $D_i$ be a regular neighborhood of maximum $l_i$ which does not contain other critical points. For the given group $B$ by Theorem \ref{thm:classPP2}  there exists a smooth function $f_i$ on $D_i$ such that $G(f_i) \cong B$, where $B\in \clP.$ Next we change the function $f_0$ on $D_i$ by replacing $f_0|_{D_i}$ to $f_i$ on $D_i$; the resulting function we denote by $f$. By Lemma \ref{prop:GraphT}, $f$ is such that $G(f) \cong B\wr \ZZZ_n$. So we proved that the inclusion $\E_1\hookrightarrow \Gclass_1(T^2)$ holds.

\subsection{Case 2: Functions, whose graphs are trees}\label{subsec:cyclecirc}  Let $A$ be a group from $\E_0$. We need to show that there exists a Morse functions $f$ on $T^2$ such that $\Gamma_f$ is a tree and $G(f) \cong A$. From the definition of the class $\E_0$ there exist $n,m\geq 1$ and $B\in \clP$ such that $A = B\wr (\ZZZ_n\times\ZZZ_{mn}).$ As in the Case 1 we divide our proof into two steps. First for a given $n,m\geq 1$ we define a Morse function $f_0:T^2\to \RRR$ such that $\Gamma_{f_0}$ is a tree and $G(f_0) = \ZZZ_n\times \ZZZ_{mn}$, and finally for the given group $B$ we change the function $f_0$ to obtain the Morse function $f:T^2\to \RRR$ such that $G(f)\cong B\wr (\ZZZ_n\times\ZZZ_{mn})$.

To realize the first step we need some preliminaries.
Let $\gamma:\RRR^2\times \ZZZ^2\to \RRR^2$ be a free action of $\ZZZ^2$ given by the formula
$\gamma((x,y), (1,1)) = (x+2, y+2)$. For  $n,m\geq 1$ as above this action induce a free action $\delta$ of the subgroup $n\ZZZ\times mn\ZZZ$ of $\ZZZ^2$ by the formula
$\delta((x,y), (1,1)) = (x + 2n, y+ 2mn).$ Note that the rectangle $P = [0,2n]\times [0,2mn]$ is a fundamental domain for the action $\delta,$ and   the quotient space of $\RRR^2/\delta$ is a $2$-torus. Let  $D_{kl}$ be a square $[k,k+1]\times [l,l+1]\subset \RRR^2$, $k,l\in\ZZZ$, and $q:\RRR^2\to \RRR^2/\delta$ be the projection map, and $\mathsf{B}$ be the set of images of $D_{kl}$ with respect to $q.$
 The action $\delta$ induces the free action $\sigma$ on $\ZZZ_n\times\ZZZ_{mn}$ on the set $\mathsf{B}$. The number of orbits of this action is equal to 
$$
\frac{\#(\text{squares } D_{kl} \text{ belonging to } P)}{\#(\ZZZ_n\times \ZZZ_{mn})} = \frac{4n^2m}{n^2m} = 4.
$$
So all disks $D_{kl} = [k,k+1]\times [l,l+1]$ in $P$ can be enumerated as $D_{rij}$ by three indexes $r = 1,2,3,4$, $i = 0, 1, \ldots, n-1$, and $j = 0,1,\ldots, mn-1$. We denote by $\mathsf{D}_k$ the set $\{D_{kij}\,|\,i = 0,1,\ldots,n-1,\, j = 0,1,\ldots, mn-1\}$ of all squares which belong to the same orbit, $r = 1,2,3,4$, and let $m_{rij}$ be the internal point of $D_{rij}.$

 Then there exists a double-periodic Morse  function $g_0:\RRR^2\to \RRR$ such that $g_0\circ \delta = g_0$, and   on $P$ it  satisfies:
\begin{itemize}
	\item $g_0(\{k,l\}) = 0$ is a saddle point, $k = 0,1,\ldots, 2n-1$, $l = 0,1,\ldots, 2mn-1$,
	\item $g_0$ has a unique maximum at $m_{r00}$ for $r = 1,2$, and a unique minimum for $r = 3,4$ and such that 
	$$
	g_0(m_{100}) = 1, \qquad g_0(m_{200}) = 2, \qquad g_0(m_{300}) = -1, \qquad g_0(m_{400}) = -2.
	$$
\end{itemize}
It is easy to see that $g_0$ induces a unique Morse function on $T^2 = \RRR^2/\delta$, which we  denote by $f_0$. By construction, $\Gamma_{f_0}$ is a tree, and $G(f_0) = \ZZZ_n\times\ZZZ_{mn}$.  

It remains to modify $f_0$ using the group $B$ as above.
 Since the action $\sigma$  on $\mathsf{B}$ has $4$ orbits, choose one of them, say when $r = 1$. 
  By Theorem \ref{thm:classPP2} there exists a Morse function $f_{1ij}$ on $D_{1ij}$ such that $G(f_{1ij}) \cong B$ for all $i = 0,\ldots, n-1$ and $j = 0,\ldots, mn-1$. We change the function $f_0$ on $D_{1ij}$ such that $f_0|_{D_{1ij}} = f_{1ij}$, and the resulting functions we will denote by $f$.  Note that $G(f|_{D_{rij}}) = 1$ for $r = 2,3,4$ and $i = 0,1\ldots, n-1$, $j = 0,1,\ldots, nm-1$ by definition of $f_0$.
  Then by Lemma \ref{prop:GraphT}
  \begin{align*}
  G(f)&\cong \prod_{r = 1}^4 G(f|_{D_{r00}})\wr (\ZZZ_n\times\ZZZ_{mn})\\
  &=G(f|_{D_{100}})\wr (\ZZZ_n\times\ZZZ_{mn})\\
  &\cong B\wr (\ZZZ_n\times\ZZZ_{mn}).
  \end{align*}
So we proved that the inclusion $\E_0\hookrightarrow \Gclass_0(T^2)$ holds.

\subsection{Case 3: Simple Morse functions}\label{subsec:cimple}
It is easy to see that there exist simple Morse functions on $T^2$ in the case when its graph contains a unique circuit. Indeed, a Morse function $f$  on $T^2$ such that $\Gamma_f$ contains a unique circuit is simple if the restriction $f|_{Q_i}$ is simple for each $i = 0,1\ldots, n$. 
The following lemma shows that this is the only case.
\begin{lemma}\label{lm:simple}
	Let $f$ be a simple Morse function on $T^2$. Then $\Gamma_f$ is not a tree.
\end{lemma}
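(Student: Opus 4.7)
The strategy is a contradiction argument that combines a degree-count on the tree $\Gamma_f$ with the Morse equality on $T^2$.  Suppose for contradiction that $f$ is a simple Morse function on $T^2$ whose graph $\Gamma_f$ is a tree.  Since $f$ is simple, each critical component of $f$ contains exactly one critical point, so the vertices of $\Gamma_f$ split into $c_0 + c_2$ extremum-vertices and $c_1$ saddle-vertices.  The plan is to pin down the degree of each type of vertex in $\Gamma_f$ and then collide the resulting handshake identity with the Morse equality.

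For an extremum the critical component is a single point and its atom is a $2$-disk with one regular level-set circle as boundary, so the corresponding vertex is a leaf of $\Gamma_f$.  The heart of the argument is to show that every saddle-vertex has degree exactly $3$.  Given a saddle $s$, its critical component $V_s$ consists of $s$ together with the four local half-branches of the level set $f^{-1}(f(s))$ emanating from $s$; by simplicity no other critical point lies on $V_s$, so each half-branch, being a regular $1$-manifold which cannot terminate in the interior of the level set, must return to $s$.  The four half-branches thus pair up into two loops at $s$ and $V_s$ is homeomorphic to a wedge of two circles, with $\chi(V_s) = -1$.  The atom $N_s$ (the connected component of $\{|f - f(s)| < \epsilon\}$ containing $V_s$) deformation retracts onto $V_s$, so $\chi(N_s) = -1$; it is compact, connected and orientable (as a subsurface of $T^2$), with boundary $\partial N_s$ consisting of regular level circles at levels $f(s) \pm \epsilon$.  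The local two-above / two-below structure at the saddle forces at least one circle on each side, so $b := \#\partial N_s \geq 2$.  From $2 - 2g(N_s) - b = -1$ one obtains $b = 3 - 2g(N_s)$, which together with $g(N_s) \geq 0$ and $b \geq 2$ forces $b = 3$ and $g(N_s) = 0$, i.e.\ $N_s$ is a pair of pants.  Since the edges of $\Gamma_f$ incident to the saddle-vertex $v_s$ are in bijection with the boundary components of $N_s$, the vertex $v_s$ has degree $3$.

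With these degrees in hand the contradiction is a short arithmetic.  The handshake lemma applied to the tree $\Gamma_f$ yields
\begin{equation*}
(c_0 + c_2)\cdot 1 + c_1 \cdot 3 \;=\; 2\bigl(c_0 + c_1 + c_2 - 1\bigr),
\end{equation*}
which simplifies to $c_0 + c_2 - c_1 = 2$.  On the other hand, the Morse equality on $T^2$ (Theorem \ref{thm:Morse}) gives $c_0 - c_1 + c_2 = \chi(T^2) = 0$, that is, $c_0 + c_2 = c_1$.  Substituting the second into the first produces $0 = 2$, the desired contradiction.  The main obstacle is the structural claim in the middle paragraph — namely that, in the simple case, the atom of every saddle is a pair of pants and hence every saddle-vertex has degree $3$ — once this is proved via the Euler-characteristic/boundary-count argument, the rest of the proof is essentially a one-line collision with the Morse equality.
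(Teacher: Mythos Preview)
Your argument is correct, and it reaches the contradiction by a genuinely different route from the paper. The paper's proof invokes the special-vertex structure of the tree $\Gamma_f$ established earlier (Subsection~\ref{subsec:tree}): it takes the atom $N$ of the special component $V=p_f^{-1}(v)$, replaces $f$ on each disk of $\overline{T^2\setminus N}$ by a function with a single extremum to obtain a new simple Morse function $g$ all of whose saddles lie on the single component $V$, and then observes that simplicity forces $c_1(g)=1$, which is incompatible with $c_0(g)+c_2(g)=c_1(g)$. Your proof bypasses the special vertex entirely: you show directly that for a simple $f$ every critical component is either a point or a figure-eight, so every vertex of $\Gamma_f$ has degree $1$ or $3$, and then the handshake identity on a tree yields $c_0+c_2-c_1=2$, contradicting the Morse equality.

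Each approach has its merits. The paper's proof fits naturally into the machinery already assembled for the main theorem (the special vertex and the decomposition of $T^2$ into disks are used elsewhere anyway), while yours is more self-contained and elementary --- it needs only the local model of a nondegenerate saddle and the Euler-characteristic computation for its atom, and would transfer verbatim to any closed orientable surface $M$ to give the (well-known) identity $\mathrm{rank}\,H_1(\Gamma_f)=g(M)$ for simple Morse functions. One small point worth tightening in your write-up: the assertion $b\geq 2$ is correct, but the phrase ``two-above / two-below'' is a local statement about sectors, not about boundary circles; the clean justification is that the two upper (resp.\ lower) sectors at $s$ meet the level $f(s)+\epsilon$ (resp.\ $f(s)-\epsilon$), so $\partial N_s$ has at least one component on each side.
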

\begin{proof}
	Assume that $f$ is simple and $\Gamma_f$ is a tree. Then there exists a unique special vertex $v$ of the tree $\Gamma_f$. Let $V = p_f^{-1}(v)$ be a special component of critical level-set of $f$ which corresponds to $v$. Note that the special component $V$ contains ``many'' saddles of $f$. Let also $N$ be a regular neighborhood of $V$ which consists of level-sets of $f$ and does not contain other critical points. Since $v$ is a special vertex, it follows that $\overline{T^2-N}$ is a disjoint union of  $2$-disks, say $\{D_i\}_{i = 1}^n$ for some $n\in\NNN.$  Note that the restriction $f|_{D_i}$ for all $i = 0,1,\ldots n$ is also a simple Morse function since $f$ is simple. 
	
	Next we change the function $f$ in the following way: replace $f|_{D_i}$ by the function on $2$-disk $D_i$ which has the only one critical point, $i = 1,\ldots, n$; the resulting function we denote by $g$. After these changes the function $g$ satisfies the following conditions:
	\begin{enumerate}
		\item $\Gamma_g$ is also a tree, since all changes of $f$ were on the connected components of the complement of $N$ being  $2$-disks,
		\item $V$ is also a critical level-set of $g$, but all saddles of $g$ belong to $V$,
		\item $g$ is simple since $f$ was simple.
	\end{enumerate}
	Since $g$ is simple (by assumption $f$ and by construction), it follows that the number of saddles belonging to $V$ must be equal to $1$, and so $c_1(g) = 1$. From Morse equality (Theorem \ref{thm:Morse})  we have $c_1(g) = c_0(g)+c_2(g)$, and so $1 = c_0(g) + c_2(g)$.  This is an inconsistency. Hence if $f$ is simple, then $\Gamma_f$ is not a tree.
\end{proof}

Now we need to show that the classes $\Gclass^{smp}(T^2)$ and $\E_2$ coincide. 
First we show the inclusion $\Gclass^{smp}(T^2) \hookrightarrow \E_2$ holds.
Let $f$ be a simple Morse function on $T^2$. By Lemma \ref{lm:simple} the graph $\Gamma_f$ has a unique circle. The restriction $f|_{Q_0}$ is simple Morse function on cylinder $Q_0$ so  $G(f|_{Q_0})$ belongs to the class $\mathcal{P}_2$. By (1) Lemma \ref{prop:GraphT} the group $G(f)$ is isomorphic to $G(f|_{Q_0})\wr \ZZZ_n$ for some $n$ which depends on the function $f$, and so $G(f)$ belongs to $\E_2$ by definition of the class $\E_2$.

The reverse inclusion $\E_2\hookrightarrow \Gclass^{smp}(T^2)$ follows from the procedure defined in Case 1 with $B\in \mathcal{P}_2$ as sub-case. Theorem is proved.


\begin{thebibliography}{10}
	
	\bibitem{Cole-McLaughlin:2004:DiscreteComputGeom}
	Kree Cole-McLaughlin, Herbert Edelsbrunner, John Harer, Vijay Natarajan, and
	Valerio Pascucci.
	\newblock Loops in {R}eeb graphs of 2-manifolds.
	\newblock {\em Discrete Comput. Geom.}, 32(2):231--244, 2004.
	
	\bibitem{Feshchenko:2016:ActTree}
	Bohdan Feshchenko.
	\newblock Actions of finite groups and smooth functions on surfaces.
	\newblock {\em Methods of Functional Analysis and Topology}, 22(3):210--219,
	2016.
	
	\bibitem{Feshchenko:DefFuncI:2019}
	Bohdan Feshchenko.
	\newblock Deformations of functions on $2$-torus.
	\newblock page arXiv:1903.01753, 2019.
	
	\bibitem{Feshchenko:2014:HomPropTreeNonTri}
	B.~Feshhenko.
	\newblock Deformations of smooth functions on $2$-torus, whose kr-graph is a
	tree.
	\newblock {\em Proceedings of Institute of Mathematics of NAS of Ukraine},
	12(6):22--40, 2015.
	
	\bibitem{IkegamiSaeki:JMSJap:2003}
	Kazuichi Ikegami and Osamu Saeki.
	\newblock Cobordism group of {M}orse functions on surfaces.
	\newblock {\em J. Math. Soc. Japan}, 55(4):1081--1094, 2003.
	
	\bibitem{Kalmar:KJM:2005}
	Boldizs{\'a}r Kalm{\'a}r.
	\newblock Cobordism group of {M}orse functions on unoriented surfaces.
	\newblock {\em Kyushu J. Math.}, 59(2):351--363, 2005.
	
	\bibitem{KalubaMarzantowicz:2015:TopolMethodsNonlinearAnal}
	Marek Kaluba, Wac\l~aw Marzantowicz, and Nelson Silva.
	\newblock On representation of the {R}eeb graph as a sub-complex of manifold.
	\newblock {\em Topol. Methods Nonlinear Anal.}, 45(1):287--307, 2015.
	
	\bibitem{MaksymenkoKravchenko:GraphsSp:2018}
	Anna Kravchenko and Sergiy Maksymenko.
	\newblock Automorphisms of kronrod-reeb graphs of morse functions on 2-sphere.
	\newblock {\em Proceedings of the International Geometry Center}, 11(4):72--79,
	2018.
	
	\bibitem{MaksymenkoKravchenko:GraphsSp:2019}
	Anna Kravchenko and Sergiy Maksymenko.
	\newblock Automorphisms of cellular divisions of 2-sphere induced by functions
	with isolated critical points.
	\newblock page arXiv:1911.10808, 2019.
	
	\bibitem{MaksymenkoKravchenko:Graphs:2019}
	Anna Kravchenko and Sergiy Maksymenko.
	\newblock Automorphisms of kronrod-reeb graphs of morse functions on compact
	surfaces.
	\newblock page arXiv:1808.08746, 2019.
	
	\bibitem{Kudryavtseva:MatSb:1999}
	E.~A. Kudryavtseva.
	\newblock Realization of smooth functions on surfaces as height functions.
	\newblock {\em Mat. Sb.}, 190(3):29--88, 1999.
	
	\bibitem{Kudryavtseva:MathNotes:2012}
	E.~A. Kudryavtseva.
	\newblock The topology of spaces of {M}orse functions on surfaces.
	\newblock {\em Math. Notes}, 92(1-2):219--236, 2012.
	\newblock Translation of Mat. Zametki {{\bf{9}}2} (2012), no. 2, 241--261.
	
	\bibitem{Kudryavtseva:MatSb:2013}
	E.~A. Kudryavtseva.
	\newblock On the homotopy type of spaces of {M}orse functions on surfaces.
	\newblock {\em Mat. Sb.}, 204(1):79--118, 2013.
	
	\bibitem{Kulinich:1998:MFAT}
	E.~V. Kulinich.
	\newblock On topologically equivalent {M}orse functions on surfaces.
	\newblock {\em Methods Funct. Anal. Topology}, 4(1):59--64, 1998.
	
	\bibitem{LychakPrishlyak:2009:MFAT}
	D.~P. Lychak and A.~O. Prishlyak.
	\newblock Morse functions and flows on nonorientable surfaces.
	\newblock {\em Methods Funct. Anal. Topology}, 15(3):251--258, 2009.
	
	\bibitem{MaksymenkoFeshchenko:2014:HomPropTree}
	S.~Maksymenko and B.~Feshchenko.
	\newblock Homotopy properties of spaces of smooth functions on$2$-torus.
	\newblock {\em Ukrainian Mathematical Journal}, 66(9):1205--1212, 2014.
	
	\bibitem{Maksymenko:AGAG:2006}
	Sergiy Maksymenko.
	\newblock Homotopy types of stabilizers and orbits of {M}orse functions on
	surfaces.
	\newblock {\em Ann. Global Anal. Geom.}, 29(3):241--285, 2006.
	
	\bibitem{Maksymenko:MFAT:2010}
	Sergiy Maksymenko.
	\newblock Functions on surfaces and incompressible subsurfaces.
	\newblock {\em Methods Funct. Anal. Topology}, 16(2):167--182, 2010.
	
	\bibitem{Maksymenko:ProcIM:ENG:2010}
	Sergiy Maksymenko.
	\newblock Functions with isolated singularities on surfaces.
	\newblock {\em Geometry and topology of functions on manifolds. Pr. Inst. Mat.
		Nats. Akad. Nauk Ukr. Mat. Zastos.}, 7(4):7--66, 2010.
	
	\bibitem{Maksymenko:UMZ:ENG:2012}
	Sergiy Maksymenko.
	\newblock Homotopy types of right stabilizers and orbits of smooth functions
	functions on surfaces.
	\newblock {\em Ukrainian Math. Journal}, 64(9):1186--1203, 2012.
	
	\bibitem{Maksymenko:DefFuncI:2014}
	Sergiy Maksymenko.
	\newblock Deformations of functions on surfaces by isotopic to the identity
	diffeomorphisms.
	\newblock page arXiv:math/1311.3347, 2014.
	
	\bibitem{MaksymenkoFeshchenko:2015:HomPropCycleNonTri}
	Sergiy Maksymenko and Bohdan Feshchenko.
	\newblock Functions on $2$-torus whose kronrod-reeb graph contains a cycle.
	\newblock {\em Methods of Functional Analysis and Topology}, 21(1):22--40,
	2015.
	
	\bibitem{MaksymenkoFeshchenko:2014:HomPropCycle}
	Sergiy Maksymenko and Bohdan Feshchenko.
	\newblock Orbits of smooth functions on $2$--torus and their homotopy types.
	\newblock {\em Matematychni Studii}, 44(1):67--83, 2015.
	
	\bibitem{MasumotoSaeki:2011:Kyushu}
	Yasutaka Masumoto and Osamu Saeki.
	\newblock A smooth function on a manifold with given {R}eeb graph.
	\newblock {\em Kyushu J. Math.}, 65(1):75--84, 2011.
	
	\bibitem{Meldrum:Longman:1995}
	J.~D.~P. Meldrum.
	\newblock {\em Wreath products of groups and semigroups}, volume~74 of {\em
		Pitman Monographs and Surveys in Pure and Applied Mathematics}.
	\newblock Longman, Harlow, 1995.
	
	\bibitem{Michalak:2018:TopolMethodsNonlinearAnal}
	\L~ukasz~Patryk Michalak.
	\newblock Realization of a graph as the {R}eeb graph of a {M}orse function on a
	manifold.
	\newblock {\em Topol. Methods Nonlinear Anal.}, 52(2):749--762, 2018.
	
	\bibitem{Polulyakh:2015:UMZH}
	E.~A. Polulyakh.
	\newblock Kronrod-{R}eeb graphs of functions on noncompact two-dimensional
	surfaces. {I}.
	\newblock {\em Ukrainian Math. J.}, 67(3):431--454, 2015.
	\newblock Translation of Ukra\"{\i}n. Mat. Zh. {{\bf{6}}7} (2015), no. 3,
	375--396.
	
	\bibitem{Polulyakh:2016:UMZH}
	E.~A. Polulyakh.
	\newblock Kronrod-{R}eeb graphs of functions on noncompact two-dimensional
	surfaces. {II}.
	\newblock {\em Ukrainian Math. J.}, 67(10):1572--1583, 2016.
	\newblock Translation of Ukra\"{\i}n. Mat. Zh. {{\bf{6}}7} (2015), no. 10,
	1398--1408.
	
	\bibitem{Sharko:PrIntMat:1998}
	V.~V. Sharko.
	\newblock Functions on surfaces. {I}.
	\newblock In {\em Some problems in contemporary mathematics ({R}ussian)},
	volume~25 of {\em Pr. Inst. Mat. Nats. Akad. Nauk Ukr. Mat. Zastos.}, pages
	408--434. Nats\={i}onal. Akad. Nauk Ukra\"{i}ni, \={I}nst. Mat., Kiev, 1998.
	
	\bibitem{Sharko:UMZ:2003}
	V.~V. Sharko.
	\newblock Smooth and topological equivalence of functions on surfaces.
	\newblock {\em Ukra\"\i n. Mat. Zh.}, 55(5):687--700, 2003.
	
	\bibitem{Sharko:MFAT:2006}
	V.~V. Sharko.
	\newblock About {K}ronrod-{R}eeb graph of a function on a manifold.
	\newblock {\em Methods Funct. Anal. Topology}, 12(4):389--396, 2006.
	
\end{thebibliography}

\end{document}